    \ifnum\bookmarkget{level}>2 
      \renewcommand*{\numberline}[1]{}%
\newtheorem{theorem}{Theorem}[section]
\newtheorem{prop}[theorem]{Proposition}
\newtheorem{lemma}[theorem]{Lemma}
\newtheorem{cor}[theorem]{Corollary}
\newtheorem{defn}[theorem]{Definition}
\newtheorem{remark}[theorem]{Remark}
\newenvironment{rem}{\begin{remark}\rm}{\end{remark}}
\newtheorem{facts}[theorem]{Fact}
\newtheorem{example}[theorem]{Example}
\newenvironment{ex}{\begin{example}\rm}{\end{example}}
\newtheorem{exercise}[theorem]{Exercise}
\newtheorem{terminology}[theorem]{Terminology}
\newtheorem{notation}[theorem]{Notation}
\newtheorem{observation}[theorem]{Observation}
\newtheorem{question}[theorem]{Question}
\DeclareRobustCommand*\uell{\mathpalette\@uell\relax}
\newcommand*\@uell[2]{
  \setbox0=\hbox{$#1\ell$}
  \setbox1=\hbox{\rotatebox{15}{$#1\ell$}}
  \dimen0=\wd0 \advance\dimen0 by -\wd1 \divide\dimen0 by 2
  \mathord{\lower 0.1ex \hbox{\kern\dimen0\unhbox1\kern\dimen0}}
}
\renewcommand*\env@matrix[1][*\c@MaxMatrixCols c]{%
  \hskip -\arraycolsep
  \let\@ifnextchar\new@ifnextchar
  \array{#1}}
\def\a{\alpha}
\def\b{\beta}
\def\ga{\gamma}
\def\d{\delta}
\def\s{\sigma}
\def\o{\omega}
\def\ep{\epsilon}
\def\vp{\varphi}
\def\L{\Lambda}
\def\Ga{\Gamma}
\def\Om{\Omega}
\def\C{\mathbb C}
\def\R{\mathbb R}
\def\S{\mathbb S}
\def\rank{{\rm rank}}
\def\lb{\langle}
\def\rb{\rangle}
\def\xr{\xrightarrow}
\def\ov{\overline}
\def\non{\noindent}
\def\p{\prime}
\def\spinc{ {\rm{spin}^{{\rm c}} }}
\def\End{{\rm End}}
\def\Sym{{\rm Sym}}
\def\c{{ \rm c}}
\def\Id{{\mathsf{Id}}}
\def\ind{{\rm ind\, }}
\def\tr{{\rm tr }}
\def\ch{{\rm ch}}
\def\del{\bar{\partial}}
\def\ov{\overline}
\def\bu{$\bullet$}
\newcommand{\sdfrac}[2]{\mbox{\small$\displaystyle\frac{#1}{#2}$}}
\def\w{\wedge}
\def\ch{ {\rm ch} }
\def\Id{{\rm Id}}
\title{{\bf Note on concentration via the conjugate-linear Hodge star operator}}
\author{Junho Lee}
\date{\empty}
\begin{document}

\maketitle

\begin{abstract}

We construct conjugate-linear perturbations of twisted $\spinc$ Dirac operators
on compact almost hermitian manifolds of dimension congruent to $2$ or $6$ modulo $8$,
employing the conjugate-linear Hodge star operator rescaled by unit complex numbers depending on degree.
These perturbations satisfy the concentration principle.
\end{abstract}

\section{Introduction}

One of the most fruitful ideas in geometry is {\em localization}, which underlies the celebrated Witten deformation. Based on this idea, consider a first-order elliptic operator
$$
D:\Ga(E)\to \Ga(F)
$$
on a compact Riemannian manifold $X$, and a deformation of the form
$$
D_s=D+sA,
$$
where $s\in\R$ and $A:E\to F$ is a bundle map.

As shown in \cite{Ma}, if the perturbation $A$ satisfies the following algebraic condition:
\begin{equation}\label{E:CP}
\s_{D^*}(\ga)\circ A + A^*\circ \s_D(\ga)=0\quad \forall \ga\in T^*X,
\end{equation}
where $\s_D$ denotes the principal symbol of $D$, then as $s\to \infty$, the kernel of $D_s$ (as well as low eigenspaces of $D_s^*D_s$) becomes concentrated near the {\em singular set} $Z_A$ of $A$, defined by
\begin{equation*}
Z_A:=\{ x\in X: \ker(A_x)\ne 0\}.
\end{equation*}

The most well-known example of this {\em concentration principle} is the Witten deformation~\cite{W}, where $D$ is the Hodge--de Rham operator.
Taubes' localization proof of the Riemann--Roch theorem is another well-known and ingenious example~\cite{T}.

When $D$ is a Dirac operator, Prokhorenkov and Richardson~\cite{PR} formulated the concentration principle for complex-linear perturbations $A$ and classified such perturbations. Maridakis~\cite{Ma} later extended the principle to include real elliptic operators as described above, yielding many interesting examples.

The concentration principle has appeared in various contexts; see, for example, \cite{R, LP, CC, P, Na, CL}. In particular, Nagy~\cite{Na}
constructed conjugate-linear perturbations $A$ satisfying the concentration principle in a general setting, using nondegenerate invariant bilinear forms on spinor modules.

The present work aims to generalize the conjugate-linear perturbations introduced in \cite{LP} (see Remark~\ref{rem}) by constructing
conjugate-linear perturbations $A_\vp$ of the twisted $\spinc$ Dirac operator
\begin{equation}\label{E:ITD}
D_E:\Ga(\S^+\otimes E)\to\Ga(\S^-\otimes E)
\end{equation}
on a compact almost hermitian manifold $X$ with $\dim X\equiv 2, 6\pmod{8}$. These perturbations are defined via the  conjugate-linear Hodge star operator rescaled by unit complex numbers depending on degree, together with a section $\vp$ of $K_X\otimes (E^*)^2$, and satisfy the condition (\ref{E:CP}) (see Theorem~\ref{T:Main}).  The dimension restriction is due to $X$ being an almost complex manifold and the fact that the perturbations $A_\vp$ interchange $\S^+$ and $\S^-$ when $\dim X\equiv 2, 6\pmod{8}$.

The rest of the paper is organized as follows.
In Section~2, we briefly review basic well-known facts about twisted $\spinc$ Dirac operators $D_E$, and present observations central to our construction. In Section~3, we construct conjugate-linear perturbations $A_\vp$ of the operator $D_E$ satisfying (\ref{E:CP}), and provide examples where the singular set $Z_{A_\vp}$ is empty; in such cases, the index of the operator $D_E$ is zero by the concentration principle (see Corollary~\ref{C:ES}).

\section{Preliminaries}

Throughout this paper, $X$ denotes a compact almost hermitian manifold of dimension $2n$, equipped with an almost complex structure $J$ and a Riemannian metric $g$ compatible with the almost complex structure $J$, while $E$ denotes a hermitian vector bundle over $X$.

This section briefly reviews basis well-known facts relevant to our discussion, fixes notation, and presents key observations to our construction.

\subsection{Twisted Dirac operator}

The almost complex structure $J$, extended by complex linearity to $TX\otimes\C$ and $T^*X\otimes\C$, induces the decompositions:
$$
TX\otimes\C=(TX)^{1,0}\oplus (TX)^{0,1}\quad\text{and}\quad
T^*X\otimes\C=(T^*X)^{1,0}\oplus (T^*X)^{0,1},
$$
where $(TX)^{1,0}$ and $(TX)^{0,1}$ are the $\pm i$-eigenbundles of $J$, and similarly for $T^*X\otimes \C$.

Let $g_\C$ denote the complex bilinear extension of the metric $g$ to $\L^*_\C X=\L^*X\otimes\C$. The associated hermitian metric on $\L^*_\C X$ is defined by
$$
\lb \a,\b\rb := g_\C(\a,\bar{\b}).
$$

The almost hermitian manifold $X$ carries a canonical $\spinc$ structure, which comes with a spinor bundle
\begin{equation}\label{E:CG}
\S=\S^+\oplus\S^-\quad\text{where}\quad\S^\pm=\Lambda^{0,\text{even/odd}}X,
\end{equation}
and a Clifford multiplication $\c:T^*X\to\End_\C(\S^\pm,\S^\mp)$, given by
\begin{equation}\label{D:CliffM}
\c(\ga)\a:=\sqrt{2}\big(\ga^{0,1}\w\a \ -\iota(\ga^{1,0})\a\big),
\end{equation}
where $\iota(\ga^{1,0})$ denotes
contraction by $\ga^\sharp_{0,1}$, the vector dual to $\ga^{1,0}$ with respect to $g_\C$.
Note that
$$
\lb \ga^{0,1}\w \a,\b\rb = \lb \a,\iota(\ga^{1,0})\b\rb\quad\text{and}\quad
\lb\c(\ga)\a,\b\rb=-\lb\a,\c(\ga)\b\rb.
$$

A $\spinc$ connection $\nabla$ on the spinor bundle $\S$, induced from the Levi-Civita connection on $X$ and a hermitian connection on $K_X^{-1}$, preserves the chiral grading in (\ref{E:CG}).
The induced connection $\nabla^+$ on $\S^+$, together with a hermitian connection $\nabla^E$ on the vector bundle $E$ and the Clifford multiplication,
defines the twisted Dirac operator (\ref{E:ITD}) via the composition
\begin{equation}\label{E:TD}
D_E:\Ga(\S^+\otimes E)\xr{\nabla^+\otimes\Id_E+\Id_{\S^+}\otimes\nabla^E}\Ga(T^*X\otimes \S^+\otimes E)\xr{\c\otimes \Id_E}\Ga(\S^-\otimes E).
\end{equation}
Consequently, the principal symbols of $D_E$ and its adjoint $D_E^*$ are given by
$$
\s_{D_E}(\ga)=\c(\ga)\otimes \Id_E\quad \text{and}\quad
\s_{D_E^*}(\ga)=-\s_D(\ga)^*=\c(\ga)\otimes \Id_E
$$
for all $\ga\in T^*X$. (See, e.g., \cite{Ni, S} for more on Dirac operators.)

\subsection{Conjugate-linear Hodge star operator}

The conjugate-linear operator $\bar{*}$, which maps $\L^{p,q}X$ to $\L^{n-p,n-q}X$,  is defined by
$$
\a\w \bar{*}(\b)=\lb \a,\b\rb dv_g
$$
for $\a,\b\in \Om^{p,q}(X)$, where $dv_g$ denotes the volume form on $X$.
The following lemma is a key observation in our construction.

\begin{lemma}\label{L:CLHS}
For any $\b\in\Om^{0,p}(X)$, $\ga\in\Om^1(X)$, and $\eta\in\Om^{n,0}(X)$, we have:
\begin{itemize}
\item[(a)]
$|\eta|^2\,\bar{*}\big(\ga^{0,1}\w \b\big)=(-1)^{n(p+1)+p}\,\eta\w \iota(\ga^{1,0})\bar{*}\big(\eta\w \b\big)$.

\item[(b)]
$|\eta|^2\,\bar{*}\big(\iota(\ga^{1,0})\b\big) = (-1)^{(n+1)(p-1)}\,\eta\w\ga^{0,1}\w \bar{*}\big(\eta\w \b\big)$.
\end{itemize}

\end{lemma}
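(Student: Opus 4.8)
The two identities are adjoint/dual to each other, so I expect to prove (a) carefully and then obtain (b) by an analogous computation (or by a duality argument). The natural strategy is to test both sides against an arbitrary form using the defining property $\alpha \wedge \bar{*}(\beta) = \langle \alpha, \beta\rangle\, dv_g$, reducing everything to a comparison of Hermitian inner products on $\Lambda^{0,q}X$. Concretely, for part (a), I would fix $\beta \in \Omega^{0,p}(X)$, $\gamma \in \Omega^1(X)$, $\eta \in \Omega^{n,0}(X)$, and evaluate $\langle \alpha, \gamma^{0,1}\wedge\beta\rangle$ for an arbitrary $\alpha \in \Omega^{0,p+1}(X)$ using the stated adjunction $\langle \gamma^{0,1}\wedge\alpha, \beta\rangle = \langle\alpha, \iota(\gamma^{1,0})\beta\rangle$, which gives $\langle \alpha, \gamma^{0,1}\wedge\beta\rangle = \langle \iota(\gamma^{1,0})\alpha, \beta\rangle$. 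The left-hand side of (a), paired with $\alpha$, is $|\eta|^2\langle \alpha, \gamma^{0,1}\wedge\beta\rangle\, dv_g = |\eta|^2\, \alpha \wedge \bar{*}(\gamma^{0,1}\wedge\beta)$.

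For the right-hand side, I would use the fact that wedging with $\eta$ is, up to the scalar $|\eta|^2$, an isometry $\Lambda^{0,q}X \to \Lambda^{n,q}X$ and that $\bar{*}$ on $\Lambda^{n,q}X$ is governed by a similar defining relation. The key algebraic point is the commutation rule between $\iota(\gamma^{1,0})$ (contraction in the holomorphic slot) and wedging by $\eta \in \Lambda^{n,0}$: since $\eta$ already occupies all holomorphic directions, $\iota(\gamma^{1,0})(\eta \wedge \psi)$ splits via the Leibniz rule into a term $(\iota(\gamma^{1,0})\eta)\wedge\psi$ and $\pm\,\eta\wedge\iota(\gamma^{1,0})\psi$; tracking these two contributions against $\bar{*}$ and reassembling is where the sign $(-1)^{n(p+1)+p}$ will emerge. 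I would organize the bookkeeping by writing $\eta = \lambda\, dz^1\wedge\cdots\wedge dz^n$ locally in a unitary coframe, so that $|\eta|^2 = |\lambda|^2$ and all the Koszul signs from moving $\iota(\gamma^{1,0})$ and $\eta$ past degree-$p$ or degree-$(p+1)$ forms become explicit; then both sides of (a), paired with $\alpha$, reduce to the same multiple of $dv_g$.

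The main obstacle is purely the sign bookkeeping: there are several sources of signs — the Koszul sign in the Leibniz rule for $\iota(\gamma^{1,0})$, the sign in $\alpha\wedge\bar{*}\beta$ versus $\bar{*}\beta \wedge \alpha$ on forms of the relevant bidegrees, and the sign picked up when commuting $\eta$ (an $(n,0)$-form) past a $(0,p)$-form — and getting the exponents $n(p+1)+p$ in (a) and $(n+1)(p-1)$ in (b) exactly right requires care. Once (a) is established, part (b) follows by the same testing procedure with the roles of $\gamma^{0,1}\wedge(-)$ and $\iota(\gamma^{1,0})(-)$ exchanged, using the adjunction relation in the opposite direction; alternatively, one can substitute $\beta \mapsto \iota(\gamma^{1,0})\beta'$ type manipulations, but I expect the direct computation to be cleaner. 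I would present the local-coframe computation as the backbone and relegate the sign verification to a short explicit check.
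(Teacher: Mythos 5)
Your proposal is essentially the paper's argument: test both sides of (a) against an arbitrary $\alpha\in\Omega^{0,p+1}(X)$, use the defining relation $\alpha\wedge\bar{*}(\cdot)=\langle\alpha,\cdot\rangle\,dv_g$, invoke the adjunction $\langle\alpha,\gamma^{0,1}\wedge\beta\rangle=\langle\iota(\gamma^{1,0})\alpha,\beta\rangle$, absorb $|\eta|^2$ by noting that $\eta\wedge(-)$ scales the Hermitian inner product by $|\eta|^2$, and then track the Koszul signs. The paper stays coordinate-free throughout, whereas you propose to pass to a local unitary coframe $\eta=\lambda\,dz^1\wedge\cdots\wedge dz^n$; both work, but the coordinate-free bookkeeping is short.

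One small but worth-correcting slip: you describe $\iota(\gamma^{1,0})$ as ``contraction in the holomorphic slot'' and invoke that ``$\eta$ already occupies all holomorphic directions.'' In fact, $\iota(\gamma^{1,0})$ is contraction by $\gamma^\sharp_{0,1}$, a $(0,1)$-vector, so it acts on the \emph{anti}holomorphic slot, mapping $\Lambda^{p,q}\to\Lambda^{p,q-1}$; the term $(\iota(\gamma^{1,0})\eta)\wedge\psi$ in your Leibniz expansion vanishes not because $\eta$ is saturated in holomorphic directions but because $\eta\in\Lambda^{n,0}$ has no antiholomorphic part at all. Also note that the paper packages the Leibniz step a little differently: rather than commuting $\iota(\gamma^{1,0})$ across $\eta\wedge(-)$, it applies $\iota(\gamma^{1,0})$ to $\alpha\wedge\bar{*}(\eta\wedge\beta)$, which vanishes identically for degree reasons (it would be a $(0,n+1)$-form), yielding directly $\iota(\gamma^{1,0})\alpha\wedge\bar{*}(\eta\wedge\beta)=(-1)^p\,\alpha\wedge\iota(\gamma^{1,0})\bar{*}(\eta\wedge\beta)$; combined with $\eta\wedge\alpha=(-1)^{n(p+1)}\alpha\wedge\eta$ this produces the exponent $n(p+1)+p$ in one stroke. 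Your route reaches the same place; just fix the holomorphic/antiholomorphic wording before committing it to paper.
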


\begin{proof}
For any $\a\in\Om^{0,p+1}X$, we have:
\begin{align*}
\a\w |\eta|^2\,\bar{*}\big(\ga^{0,1}\w \b\big)
&=|\eta|^2\big\lb \a,\ga^{0,1}\w \b\big\rb dv_g
=|\eta|^2\,\big\lb \iota(\ga^{1,0})\a, \b\big\rb dv_g
\\
&=\big\lb \eta\w \iota(\ga^{1,0})\a,\eta\w \b\big\rb dv_g
=\eta\w \iota(\ga^{1,0})\a\w \bar{*}\big(\eta\w \b\big)
\\
&=(-1)^p\,\eta \w \a\w \iota(\ga^{1,0})\bar{*}\big(\eta\w \b\big)
\\
&=\a\w\Big((-1)^{n(p+1)+p}\,\eta\w \iota(\ga^{1,0})\bar{*}\big(\eta\w \b\big)\Big),
\end{align*}
where the third equality follows from the definition of the hermitian metric on $\L^*_\C X$,
and the sign factors follow since contraction and wedge product are antiderivations. This implies (a).
The proof of (b) is similar. $\qedhere$
\end{proof}

\subsubsection{Rescaling}

For our purposes,  we rescale the operator $\bar{*}$ as follows:
\begin{equation}\label{D:HSWS}
\tau:=\ep_{k}\bar{*}\quad \text{on}\quad \L^{k}_\C X, \quad \text{where}\quad \ep_{k}=i^{k(k-1)+n}.
\end{equation}
The unit complex number $\ep_k$, which also appears in the context of the signature operator, plays a useful role in our construction. Observe that
\begin{equation}\label{E:Square}
\bar{*}^2=(-1)^{k}\Id\quad\text{on}\quad \L^{k}_\C X\qquad\text{and}\qquad\tau^2=(-1)^n\Id.
\end{equation}
The following identity is central to our construction.

\begin{prop}\label{P:CLO}
For any $\b\in \Om^{0,p}(X)$, $\ga\in \Om^1(X)$, and $\eta\in\Om^{n,0}(X)$, we have
$$
|\eta|^2\,\tau\circ\c(\ga)(\b) = (-1)^{\frac{n(n+1)}{2}+1}\,\eta\w \c(\ga)\circ \tau(\eta\w\b).
$$
\end{prop}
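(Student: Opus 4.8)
The plan is to expand $\c(\ga)$ on both sides using its definition (\ref{D:CliffM}) and reduce everything to the two identities in Lemma~\ref{L:CLHS}. On the left side, $\c(\ga)(\b)=\sqrt2\big(\ga^{0,1}\w\b-\iota(\ga^{1,0})\b\big)$ lives in $\Om^{0,p+1}(X)\oplus\Om^{0,p-1}(X)$, so applying $\tau=\ep_{p+1}\bar*$ to the first summand (degree $p+1$) and $\tau=\ep_{p-1}\bar*$ to the second (degree $p-1$) and invoking Lemma~\ref{L:CLHS}(a) and (b) expresses $|\eta|^2\,\tau\circ\c(\ga)(\b)$ in terms of $\eta\w\iota(\ga^{1,0})\bar*(\eta\w\b)$ and $\eta\w\ga^{0,1}\w\bar*(\eta\w\b)$. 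On the right side, $\eta\w\b\in\Om^{n,p}(X)$ so $\bar*(\eta\w\b)\in\Om^{0,n-p}(X)=\Om^{0,(n-p)}(X)$; write $\psi:=\bar*(\eta\w\b)$, apply $\c(\ga)$ to it to get $\sqrt2\big(\ga^{0,1}\w\psi-\iota(\ga^{1,0})\psi\big)$, then $\tau$ acts on the two pieces (degrees $n-p+1$ and $n-p-1$), and finally wedge with $\eta$. After this, both sides are $\sqrt2$ times an $\C$-linear combination of the same two basic expressions $\eta\w\iota(\ga^{1,0})\bar*(\eta\w\b)$ and $\eta\w\ga^{0,1}\w\bar*(\eta\w\b)$.

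The substantive work is then purely bookkeeping of the scalar coefficients. First I would record, from (\ref{D:HSWS}), the explicit values $\ep_{p\pm1}$ and $\ep_{n-p\pm1}$ as powers of $i$, and from (\ref{E:Square}) the relation $\bar*^2=(-1)^k\Id$, which is needed on the right side to simplify $\bar*$ applied to something of the form $\bar*(\eta\w\b)$ wedged appropriately — more precisely, I expect to use $\bar*(\eta\w\psi')$-type identities or re-apply Lemma~\ref{L:CLHS} with $\eta\w\b$ replaced by a form whose $\bar*$ is $\psi$. Then I would match the coefficient of $\eta\w\iota(\ga^{1,0})\bar*(\eta\w\b)$ on the left against that on the right, and likewise for $\eta\w\ga^{0,1}\w\bar*(\eta\w\b)$, and check both matches produce the single overall factor $(-1)^{\frac{n(n+1)}{2}+1}$. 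The sign exponents from Lemma~\ref{L:CLHS} — namely $n(p+1)+p$ and $(n+1)(p-1)$ — together with the exponents hidden in $\ep_{p\pm1}$ and $\ep_{n-p\pm1}$ must combine so that all $p$-dependence cancels; this cancellation is exactly the point of the rescaling by $\ep_k$, and verifying it is the crux.

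The main obstacle I anticipate is precisely this cancellation of the $p$-dependence and the careful tracking of signs when $\bar*$ is applied to $\eta\w\b$ versus when it is applied to forms obtained after a further Clifford multiplication: one must be consistent about whether $\bar*$ on $\L^{n,p}X$ or on $\L^{0,\bullet}X$ is being used, and about the antiderivation signs incurred when moving $\eta$ or $\iota(\ga^{1,0})$ past intermediate forms. A clean way to control this is to test both sides against an arbitrary $\a\in\Om^{0,p-1}(X)$ by wedging with $\a$ and with $\ov\a$ appropriately and using the defining property $\a\w\bar*(\b)=\lb\a,\b\rb\,dv_g$ together with $\lb\c(\ga)\a,\b\rb=-\lb\a,\c(\ga)\b\rb$, thereby turning the identity into a scalar equality of Hermitian pairings; this is the same device used in the proof of Lemma~\ref{L:CLHS} and should make the sign arithmetic mechanical rather than error-prone. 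Once the two coefficient computations agree, the proposition follows.
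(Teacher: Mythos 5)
Your overall strategy is the paper's: expand $\c(\ga)$ via~(\ref{D:CliffM}), invoke Lemma~\ref{L:CLHS} to rewrite the $\bar*$'s that appear, collect the two basic terms $\eta\w\iota(\ga^{1,0})\bar*(\eta\w\b)$ and $\eta\w\ga^{0,1}\w\bar*(\eta\w\b)$, and verify that the scalar coefficients match because the $\ep_k$-rescaling cancels the $p$-dependence. That is exactly the crux, and the paper's proof is precisely the bookkeeping you anticipate, culminating in the identity $\ep_{p+1}\ep_{n+p}^{-1}(-1)^{n(p+1)+p}=(-1)^{n(n+1)/2}=\ep_{p-1}\ep_{n+p}^{-1}(-1)^{(n+1)(p-1)}$.

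However, your treatment of the right-hand side contains a genuine error that would derail the computation. The right side reads $\eta\w\c(\ga)\circ\tau(\eta\w\b)$: $\tau$ is applied \emph{once}, to $\eta\w\b\in\L^{n,p}X$ (so $\tau(\eta\w\b)=\ep_{n+p}\bar*(\eta\w\b)$), then $\c(\ga)$, then wedge with $\eta$. You instead set $\psi=\bar*(\eta\w\b)$, apply $\c(\ga)$ to $\psi$, and then ``$\tau$ acts on the two pieces (degrees $n-p+1$ and $n-p-1$)'' before wedging with $\eta$ --- that is, you apply $\tau$ a \emph{second} time, after $\c(\ga)$. That extra $\tau$ sends $\c(\ga)\psi\in\L^{0,n-p\pm1}X$ back into $\L^{n,p\mp1}X$, and wedging with $\eta\in\L^{n,0}X$ then lands in holomorphic degree $2n$, so your version of the right side is identically zero (for $n\ge1$). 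Correspondingly, the $\bar*^2=(-1)^k\Id$ step you flag as needed on the right is not needed at all, and Lemma~\ref{L:CLHS} enters only on the \emph{left} side (to turn $|\eta|^2\bar*(\ga^{0,1}\w\b)$ and $|\eta|^2\bar*(\iota(\ga^{1,0})\b)$ into $\eta$-wedge expressions); the right side is handled by direct expansion alone. Once you correct the parsing of $\c(\ga)\circ\tau$, your coefficient-matching plan (and the pairing-against-$\a$ device for controlling signs) goes through and reduces to the displayed $\ep$-identity.
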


\begin{proof}
The proof follows from Lemma~\ref{L:CLHS} and the identity:
$$
\ep_{p+1}\ep_{n+p}^{-1}(-1)^{n(p+1)+p}=(-1)^{\frac{n(n+1)}{2}}=\ep_{p-1}\ep_{n+p}^{-1}(-1)^{(n+1)(p-1)}.
\qedhere
$$
\end{proof}

\subsubsection{Adjoint}

Since $\tau$ is conjugate-linear, its adjoint $\tau^*$ is defined with respect to the real part of the hermitian metric.
By (\ref{E:Square}), we obtain:

\begin{lemma}\label{L:RA}
$\tau^*=(-1)^{n}\tau$.
\end{lemma}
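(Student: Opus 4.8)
The plan is to compute $\langle \tau^* \alpha, \beta \rangle$ in terms of $\langle \tau \alpha, \beta \rangle$ using the defining relation for the adjoint of a conjugate-linear operator, and then to invert using the two squaring identities in (\ref{E:Square}). Recall that since $\tau$ is conjugate-linear, its adjoint $\tau^*$ is characterized by $\mathrm{Re}\,\langle \tau^* \alpha, \beta \rangle = \mathrm{Re}\,\langle \alpha, \tau \beta \rangle$ for all homogeneous forms; equivalently, the natural "transpose" relation here is $\langle \tau \alpha, \beta \rangle = \langle \tau \beta, \alpha \rangle$ (that $\tau$ is self-transpose up to the appropriate sign), which is what I would establish first directly from the definition $\alpha \wedge \bar{*}\beta = \langle \alpha, \beta \rangle\, dv_g$ together with $\epsilon_k \bar\epsilon_k = 1$.

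First I would record that for $\alpha, \beta \in \Om^{k}_\C X$ one has $\beta \wedge \bar{*}\alpha = \langle \beta, \alpha \rangle\, dv_g$, while reversing the wedge costs a sign $(-1)^{k(n-k)}$, and compare with $\alpha \wedge \bar{*}\beta$. Using $\bar{*}^2 = (-1)^k \Id$ on $\L^k_\C X$ from (\ref{E:Square}), I can write $\alpha = (-1)^k \bar{*}(\bar{*}\alpha)$, substitute, and deduce a clean relation of the form $\langle \bar{*}\beta, \bar{*}\alpha \rangle = \langle \alpha, \beta\rangle$ or equivalently that $\bar{*}$ is (conjugate-)isometric up to the stated signs; combining with the degree-dependent unit $\epsilon_k$ — and crucially the fact that $\epsilon_k$ has modulus one, so it only contributes a unimodular conjugation factor — yields the transpose symmetry of $\tau$.

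Next I would feed this into the adjoint equation: $\langle \tau^* \alpha, \beta \rangle = \overline{\langle \alpha, \tau \beta \rangle}$ (the conjugation is the standard one for adjoints of conjugate-linear maps), rewrite the right-hand side using the transpose symmetry of $\tau$ as a multiple of $\langle \tau \alpha, \beta \rangle$, and read off that $\tau^* = c\,\tau$ for an explicit constant $c$. The constant is then pinned down by applying $\tau^*$ twice, or by pairing $\tau^* = c\tau$ against the identity $\tau^2 = (-1)^n \Id$ from (\ref{E:Square}): since $\tau^*\tau$ must be a nonnegative self-adjoint operator and equals $c\,\tau^2 = c(-1)^n\Id$, positivity forces $c(-1)^n = 1$, hence $c = (-1)^n$, giving $\tau^* = (-1)^n \tau$ as claimed.

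I expect the main obstacle to be bookkeeping of the sign conventions: tracking the $(-1)^{k(n-k)}$ from wedge commutation against the reality of $dv_g$, and making sure the complex conjugation inherent in $\langle\alpha,\beta\rangle = g_\C(\alpha,\bar\beta)$ interacts correctly with the conjugate-linearity of $\bar{*}$ and with the unimodular rescaling factor $\epsilon_k = i^{k(k-1)+n}$. The cleanest route, which I would follow to sidestep a long case analysis, is the positivity argument at the end: once $\tau^* = c\tau$ is known for \emph{some} unit constant $c$ (which is forced by $\tau$ being conjugate-linear and $\bar{*}$ being essentially an isometry), the relation $\tau^*\tau \geq 0$ together with $\tau^2 = (-1)^n\Id$ determines $c$ uniquely without any further sign chasing.
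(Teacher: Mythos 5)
Your proposal describes two routes. The first one — establish that $\tau$ is self-transpose up to a constant by pairing $\langle\beta,\tau\alpha\rangle$ against $\langle\alpha,\tau\beta\rangle$ via the defining relation $\alpha\wedge\bar{*}\beta=\langle\alpha,\beta\rangle dv_g$, using $\bar{*}^2=(-1)^k\Id$ and $\epsilon_k\bar\epsilon_k=1$, then read off $\tau^*$ from the adjoint equation — is essentially the paper's proof. The paper carries out exactly this direct computation, getting $\langle\beta,\tau\alpha\rangle dv_g=\bar\epsilon_k(-1)^k\,\beta\wedge\alpha=\bar\epsilon_k\,\alpha\wedge\beta=\bar\epsilon_k\epsilon_{2n-k}(-1)^{2n-k}\,\alpha\wedge\bar{*}(\tau\beta)=(-1)^n\langle\alpha,\tau\beta\rangle dv_g$ and concluding by taking real parts.

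Your preferred second route (the positivity argument) is genuinely different, but as written it has a gap: you assert that $\tau^*=c\tau$ for a unimodular constant $c$, attributing this to "$\tau$ being conjugate-linear and $\bar{*}$ being essentially an isometry," without establishing it. Conjugate-linearity alone does not force the adjoint to be a scalar multiple; the isometry is the whole content, and it needs to be proved (by a computation of the same kind as the transpose argument you meant to sidestep: $\langle\bar{*}\alpha,\bar{*}\beta\rangle=\langle\beta,\alpha\rangle$, hence $\tau$ preserves $\mathrm{Re}\langle\cdot,\cdot\rangle$ since $|\epsilon_k|=1$). Once you do establish that $\tau$ is a real isometry, you have $\tau^*\tau=\Id$ outright, and multiplying on the right by $\tau$ and using $\tau^2=(-1)^n\Id$ gives $\tau^*(-1)^n=\tau$, i.e., $\tau^*=(-1)^n\tau$ — at which point the positivity step is unnecessary. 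So the route is salvageable and arguably cleaner than the paper's sign-bookkeeping, but the load-bearing step (the isometry of $\tau$) cannot be waved through; it is precisely what replaces the paper's computation, not a free fact.
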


\begin{proof}
For any $\a\in\Om^k_\C(X)$ and $\b\in\Om^{2n-k}_\C(X)$, we have
\begin{align*}
\lb \b, \tau\a\rb dv_g &=\ov{\ep}_k(-1)^k\,\b\w \a
= \ov{\ep}_k\,\a\w\b =\ov{\ep}_k\,\ep_{2n-k}(-1)^{2n-k}\,\a\w \bar{*}(\tau\b)
\\
&=(-1)^{n}\lb \a,\tau\b\rb dv_g.
\end{align*}
This shows that ${\rm Re}\lb \tau\a,\b\rb
={\rm Re}\lb\a,(-1)^n\tau\b\rb$, and hence proves the lemma.
$\qedhere$
\end{proof}

\subsubsection{Operator with values in $E$}

The hermitian metric on $E$, still denoted by $\lb\ ,\ \rb$, defines a conjugate-linear isomorphism
$E\to E^*$ by $\xi\mapsto \xi^*:=\lb\ ,\xi\rb$.
This induces the map
$$
\bar{*}_E:\L^{p,q}X\otimes E^*\to \L^{n-p,n-q}X\otimes E
$$
defined by
$\bar{*}_E(\a\otimes\xi^*)=\bar{*}(\a)\otimes \xi$.
Now we set
$$
\tau_E:=\ep_k\bar{*}_E\quad \text{on}\quad \L^{k}_\C X\otimes E.
$$
Note that $\tau_E(\a\otimes\xi^*)=\tau(\a)\otimes \xi$ and $\tau_E^*(\a\otimes\xi)=(-1)^n\tau(\a)\otimes \xi^*$.

\section{Conjugate-linear perturbations}

\subsection{Concentrating condition}

This subsection constructs conjugate-linear perturbations of the twisted Dirac operator $D$ in (\ref{E:TD}) that satisfy
the concentrating condition (\ref{E:CP}).

\begin{defn}\label{D:CLP}
Given a section $\vp\in\Ga\big(K_X\otimes (E^*)^2\big)$, viewed as a complex-linear bundle map
$\vp:E\to K_X\otimes E^*$, we define a conjugate-linear bundle map
$$
A_\vp:\L^{0,p}X\otimes E\to \L^{0,n-p}X\otimes E
$$
by the composition
$$
A_\vp:\L^{0,p}\otimes E \xr{\Id\otimes \vp} \L^{0,p}\otimes K_X\otimes E^* \cong \L^{n,p}\otimes E^*
\xr{\ \tau_E\ }\L^{0,n-p}\otimes E.
$$

\end{defn}

\medskip

Noting $A_\vp:\S^\pm\otimes E\to \S^\mp\otimes E$ when $\dim X\equiv 2, 6\pmod{8}$, and using the decomposition
$$
E^*\otimes E^* =\Sym^2 E^*\oplus \L^2 E^*,
$$
we obtain concentrating pairs $(\s_{D_E},A_\vp)$ in the sense of \cite{Ma}:

\begin{theorem}\label{T:Main}
Let $D_E$ be the twisted Dirac operator in (\ref{E:TD}) and $A_\vp$  as in Definition~\ref{D:CLP}. Suppose
$\vp\in\Ga(K_X\otimes \Sym^2 E^*)$ if $\dim X\equiv 2\pmod{8}$ and $\vp\in\Ga(K_X\otimes\L^2E^*)$
if $\dim X\equiv 6\pmod{8}$. Then $A_\vp$ satisfies the concentrating condition
\begin{equation}\label{E:MainCC}
\s_{D_E^*}(\ga)\circ A_\vp + A_\vp^*\circ \s_{D_E}(\ga)=0\quad\forall\ga\in T^*X.
\end{equation}
\end{theorem}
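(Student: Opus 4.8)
The plan is to verify the algebraic identity (\ref{E:MainCC}) pointwise, working in the fibers over a fixed point $x\in X$, and to reduce it to the computation already packaged in Proposition~\ref{P:CLO}. First I would note that because $\s_{D_E}(\ga)=\s_{D_E^*}(\ga)=\c(\ga)\otimes\Id_E$, the condition (\ref{E:MainCC}) becomes
$$
\big(\c(\ga)\otimes\Id_E\big)\circ A_\vp + A_\vp^*\circ\big(\c(\ga)\otimes\Id_E\big)=0.
$$
So the whole content is to compute $A_\vp^*$ and then compare the two composites. Writing a local section as $\b\otimes\xi$ with $\b\in\L^{0,p}X$ and $\xi\in E$, and writing $\vp(\xi)=\eta\otimes\xi'^{*}$ for a suitable $\eta\in\L^{n,0}X$ and $\xi'\in E$ (so that $\Id\otimes\vp$ sends $\b\otimes\xi$ to $\b\otimes\eta\otimes\xi'^{*}\cong(\eta\w\b)\otimes\xi'^{*}$), Definition~\ref{D:CLP} gives $A_\vp(\b\otimes\xi)=\tau(\eta\w\b)\otimes\xi'$.

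Next I would apply Proposition~\ref{P:CLO}, which says exactly that $|\eta|^2\,\tau\circ\c(\ga)(\b)=(-1)^{\frac{n(n+1)}{2}+1}\,\eta\w\c(\ga)\circ\tau(\eta\w\b)$; since $\c(\ga)$ anticommutes with wedging by the form $\eta$ of even bidegree in the relevant congruence classes (and more precisely one tracks the sign of moving $\c(\ga)$ past $\eta$), this identifies $\big(\c(\ga)\otimes\Id_E\big)\circ A_\vp$ with (a scalar multiple of) the $\tau_E$-image of $\Id\otimes\vp$ applied to $\c(\ga)\b\otimes\xi$ — that is, it says $A_\vp$ intertwines Clifford multiplication with $\tau$ up to the universal sign $c_n:=(-1)^{\frac{n(n+1)}{2}+1}$ and the scalar $|\eta|^{-2}$. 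To compute $A_\vp^*$ I would use the factorization of $A_\vp$ into $\Id\otimes\vp$, the canonical isomorphism, and $\tau_E$; taking adjoints, $A_\vp^*=(\Id\otimes\vp)^*\circ(\text{iso})^*\circ\tau_E^*$, and by the last displayed line of Section~2, $\tau_E^*=(-1)^n\tau_E$ on the relevant degree, while $(\Id\otimes\vp)^*$ involves the adjoint of $\vp:E\to K_X\otimes E^*$. Here the symmetry hypothesis enters: viewing $\vp$ as an element of $K_X\otimes(E^*)^{\otimes 2}$, its adjoint $\vp^*:K_X\otimes E^*\to E$ (built using the conjugate-linear $E\cong E^*$) is, up to the conjugate-linear twist by $K_X$, given by the \emph{transpose} of the $(E^*)^{\otimes 2}$ factor; so $\vp^*$ "is" $\pm\vp$ precisely when $\vp$ lies in $K_X\otimes\Sym^2E^*$ (sign $+1$) or $K_X\otimes\L^2E^*$ (sign $-1$). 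The dimension hypothesis $\dim X\equiv 2,6\pmod 8$ fixes, via $n$, the parity of $\frac{n(n+1)}{2}+1$ and of $n$, and one checks that the product of $c_n$, the factor $(-1)^n$ from $\tau_E^*$, and the $\pm1$ from the (anti)symmetry of $\vp$ comes out to $-1$ in \emph{both} cases, which is exactly the cancellation required by (\ref{E:MainCC}).

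Concretely the steps are: (i) reduce (\ref{E:MainCC}) to the Clifford-intertwining statement for $A_\vp$ using $\s_{D_E}=\s_{D_E^*}=\c(\ga)\otimes\Id_E$; (ii) write $A_\vp$ on decomposable sections and invoke Proposition~\ref{P:CLO} to get $\big(\c(\ga)\otimes\Id_E\big)\circ A_\vp = c_n\,|\eta|^{-2}\,(\text{the }\tau_E\text{-transport of }\c(\ga)\text{ through }\vp)$; (iii) compute $A_\vp^*$ from the factorization, using $\tau_E^*=(-1)^n\tau_E$ and the (anti)symmetry of $\vp$ to get $A_\vp^*\circ\big(\c(\ga)\otimes\Id_E\big) = \pm(-1)^n c_n'\,|\eta|^{-2}\,(\text{same transport})$ for the appropriate companion sign $c_n'$; (iv) check that in the two congruence classes the total sign is $-1$, so the two terms cancel. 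I expect step (iii) — correctly identifying $A_\vp^*$, in particular getting the adjoint of the bundle map $\vp$ right as an element of $(E^*)^{\otimes 2}$ and matching it against the $\Sym^2/\L^2$ splitting, together with bookkeeping the signs coming from moving $\c(\ga)$ past $\eta$ and from $\bar *$ being conjugate-linear — to be the main obstacle; steps (i), (ii), (iv) are essentially substitution and an elementary parity check once the sign conventions of Proposition~\ref{P:CLO} and Lemma~\ref{L:RA} are in hand.
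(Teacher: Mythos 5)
Your proposal is correct and follows essentially the same route as the paper: reduce to the Clifford-intertwining statement via $\s_{D_E}=\s_{D_E^*}=\c(\ga)\otimes\Id_E$, use Proposition~\ref{P:CLO} and Lemma~\ref{L:RA} to produce the sign $(-1)^{\frac{n(n+1)}{2}+n+1}$, identify $\vp^*$ (in a unitary frame) as the conjugate-transpose of the $(E^*)^{\otimes 2}$ factor so that the $\Sym^2$/$\Lambda^2$ hypothesis contributes the companion $\pm1$, and verify the parity check that the total sign is $-1$ in both congruence classes. The paper organizes the computation slightly differently — it evaluates $\s_{D_E^*}(\ga)\circ A_\vp$ directly and applies Proposition~\ref{P:CLO} only to $\tau_E^*\circ\s_{D_E}(\ga)$ before applying $\vp^*$ — but the lemmas invoked, the sign arithmetic ($c_n\cdot(-1)^n\cdot(\pm 1)=-1$ for $n\equiv 1,3\pmod 4$), and the role of the (anti)symmetry of $\vp$ are exactly as you describe.
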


\begin{proof}
In unitary frames $\{\phi_i\}$ for $E$ (with dual coframe $\{\phi^i\}$) and $\eta$ for $K_X$ (with dual coframe $\eta^*$),
the complex bundle map $\vp:E\to K_X\otimes E^*$ is given by
\begin{equation}\label{E:LE}
\vp=\vp_{ij} \eta\otimes \phi^i\otimes\phi^j:\xi=\xi_k\phi_k\mapsto \vp_{ij}\xi_j\eta\otimes\phi^i.
\end{equation}
It follows that for any $\b\otimes \xi\in \Ga(\S^+\otimes E)$ and any $\ga\in \Om^1(X)$,
\begin{align}\label{A:E1}
\s_{D^*}(\ga)\circ A_\vp(\b\otimes\xi)
&=(\c(\ga)\otimes \Id_E)\circ \tau_E\big(\eta\w\b\otimes \vp_{ij}\xi_j\phi^i\big)
=\c(\ga)\tau(\eta\w \b)\otimes \ov{\vp}_{ij}\ov{\xi}_j\phi_i.
\end{align}

On the other hand, since $|\eta|^2=1$, by Lemma~\ref{L:RA} and Proposition~\ref{P:CLO}, we have
\begin{align}\label{A:E2}
\tau_E^*\circ\s_D(\ga)(\b\otimes\xi)
&=(-1)^n\tau(\c(\ga)\b) \otimes \ov{\xi}_k\phi^k
=(-1)^{\frac{n(n+1)}{2}+n+1}\,\eta\w \c(\ga)\circ \tau(\eta\w\b)\otimes \ov{\xi}_k\phi^k.
\end{align}
Observe that the adjoint $\vp^*:K_X\otimes E^*\to E$ of the complex bundle map $\vp$ is given by
$$
\vp^*=\ov{\vp}_{ji}\phi_i\otimes \eta^*\otimes\phi_j:\nu=\nu_k\eta\otimes \phi^k\mapsto \ov{\vp}_{ji}\nu_j\phi_i.
$$
With the canonical isomorphism $\L^{n,p}X\cong \L^{0,p}\otimes K_X$, from (\ref{A:E2}) we obtain
\begin{align}\label{A:E3}
A_\vp^*\circ\s_D(\ga)(\b\otimes\xi)
&=(-1)^{\frac{n(n+1)}{2}+n+1}\,\Id\otimes\vp^*\big(\c(\ga)\circ \tau(\eta\w\b)\otimes\ov{\xi}_k\eta\otimes\phi^k\big)
\notag\\
&=(-1)^{\frac{n(n+1)}{2}+n+1}\,\c(\ga)\circ \tau(\eta\w\b)\otimes \ov{\vp}_{ji}\ov{\xi}_j\phi_i.
\end{align}

By the assumption on $\vp$, (\ref{A:E1}), and (\ref{A:E3}), the proof now follows from the identity
$$
\sdfrac{n(n+1)}{2}+n+1=\left\{
\begin{array}{ll}
1\ \ \ &\text{if $n\equiv 1\pmod{4}$}
\\
0\ \ \ &\text{if $n\equiv 3\pmod{4}$}
\end{array}
\right. \qedhere
$$
\end{proof}

\medskip

The next two remarks are related to the motivation for our approach.

\begin{rem}
For the twisted Dirac operator $D_E:\Ga(\S^+\otimes E)\to \Ga(\S^-\otimes E)$, there is no complex-linear bundle map
$A:\S^+\otimes E\to \S^-\otimes E$ that satisfies the concentrating condition (\ref{E:MainCC}). This explains why the Dolbeault and signature
operators do not admit such complex perturbations. (See Section~2 of \cite{PR}.)

\end{rem}

\begin{rem}\label{rem}
When $\dim X=2$, the manifold $X$ is a compact complex curve, and the operator $D_E=\sqrt{2}\,\del_E:\Om^0(E)\to\Omega^{0,1}(E)$ is the usual Cauchy--Riemann operator on $E$. In \cite{LP}, when $E$ is a holomorphic line bundle and $\vp$ is a holomorphic section of $K_X\otimes (E^*)^2$, a conjugate-linear bundle map $R:E\to \L^{0,1}X\otimes E$ is defined as the composition $R=\bar{*}_E\circ \vp$. In this case,
$A_\vp=i R$ and $A^*_\vp=i R^*$, so our construction generalizes the conjugate-linear bundle map $R$.

Indeed,  $A_\vp=iR$ satisfies the following equation, which is stronger than the concentrating condition:
$$
D_E^*\circ A_\vp + A_\vp^*\circ D_E =0.
$$
(See Lemma~2.1 of \cite{LP}.) When $E$ is a theta characteristic (i.e., $E^2\cong K_X$), this implies that $A_\vp$ restricts to a conjugate-linear isomorphism $\ker D_E\to \ker D_E^*$, which leads to a proof of the Atiyah--Mumford theorem:  $h^0(E)\,(=\dim \ker D_E)$ mod 2 is {\em deformation invariant} (see Section~3 of \cite{LP}).
\end{rem}

\subsection{Concentration principle}

Let  $D_E, A_\vp:\Ga(\S^+\otimes E)\to \Ga(\S^-\otimes E)$ be as in Theorem~\ref{T:Main} and consider the deformation of the twisted Dirac operator $D_E$
given by
$$
D_s=D_E+sA_\vp.
$$
The following concentration principle  shows that as $s\to \infty$, the kernel of $D_s$ (as well as low eigenspaces of $D_s^*D_s$) becomes concentrated near the singular set
$$
Z_\vp:=Z_{A_\vp}=\{x\in X:\det(\vp_x)=0\}.
$$

\begin{theorem} \label{T:CP}
Let $D_s=D_E+A_\vp$ be as above. For each $\d>0$ and $C\geq 0$, there exits a constant $C^\p=C^\p(\d,A_\vp,C)>0$ such that if $\zeta\in \Ga(\S^+\otimes E)$ satisfies $||\zeta||_2=1$ and $||D_s\zeta||_2^2\leq C|s|$,
where $||\cdot||_2$ denotes the $L^2$-norm, then we have
$$
\int_{X\setminus Z_\vp(\d)}|\zeta|^2 dv_g < \sdfrac{C^\prime}{|s|},
$$
where $Z_\vp(\d)$ is the $\d$-neighborhood of the singular set $Z_\vp$.
\end{theorem}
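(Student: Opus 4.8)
The plan is to adapt the standard Bochner--Weitzenb\"ock argument underlying the concentration principle of \cite{Ma} to the present setting, the only real input being the pointwise algebraic identity established in Theorem~\ref{T:Main}. First I would compute $D_s^*D_s$. Since $D_s=D_E+sA_\vp$, we have
\begin{equation*}
D_s^*D_s=D_E^*D_E+s\big(D_E^*A_\vp+A_\vp^*D_E\big)+s^2A_\vp^*A_\vp.
\end{equation*}
The middle term, which is a \emph{priori} first order, is in fact a zeroth-order operator: its principal symbol at $\ga\in T^*X$ is $\s_{D_E^*}(\ga)\circ A_\vp+A_\vp^*\circ\s_{D_E}(\ga)$, which vanishes by the concentrating condition (\ref{E:MainCC}). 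Hence there is a smooth bundle endomorphism $\mathcal{R}$ of $\S^+\otimes E$, depending only on $A_\vp$ (through $D_EA_\vp$, $A_\vp^*$ and their symbols) and independent of $s$, with $D_E^*A_\vp+A_\vp^*D_E=\mathcal{R}$. Let $\|\mathcal R\|_\infty$ be its sup-norm over the compact $X$.

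Next I would extract the key positivity. The operator $A_\vp^*A_\vp$ is a self-adjoint, nonnegative bundle endomorphism of $\S^+\otimes E$, and for $x\in X$ its kernel is exactly $\ker A_{\vp,x}$; since $\tau_E$ is an isomorphism and $\vp$ enters only through $\Id\otimes\vp$, one checks $\ker A_{\vp,x}=0$ precisely when $\vp_x$ is invertible, i.e. $\det(\vp_x)\neq0$, which is the complement of $Z_\vp$. Therefore on the compact set $X\setminus Z_\vp(\d)$ the smallest eigenvalue of $A_\vp^*A_\vp$ is bounded below by some $\mu=\mu(\d)>0$, giving the pointwise estimate $\langle A_\vp^*A_\vp\,\zeta,\zeta\rangle\geq\mu|\zeta|^2$ on $X\setminus Z_\vp(\d)$, and $\langle A_\vp^*A_\vp\,\zeta,\zeta\rangle\geq0$ everywhere.

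Now I would run the integral estimate. For $\zeta$ with $\|\zeta\|_2=1$ and $\|D_s\zeta\|_2^2\leq C|s|$, pairing $D_s^*D_s\zeta$ with $\zeta$ in $L^2$ gives
\begin{equation*}
C|s|\ \geq\ \|D_s\zeta\|_2^2\ =\ \|D_E\zeta\|_2^2+s\!\int_X\!\langle\mathcal R\zeta,\zeta\rangle\,dv_g+s^2\!\int_X\!\langle A_\vp^*A_\vp\,\zeta,\zeta\rangle\,dv_g.
\end{equation*}
Dropping the nonnegative term $\|D_E\zeta\|_2^2$, bounding $|s\int\langle\mathcal R\zeta,\zeta\rangle|\leq|s|\,\|\mathcal R\|_\infty$ (using $\|\zeta\|_2=1$), and using $\int_X\langle A_\vp^*A_\vp\zeta,\zeta\rangle\geq\mu\int_{X\setminus Z_\vp(\d)}|\zeta|^2\,dv_g$, we get, for $s\neq0$,
\begin{equation*}
\mu\,s^2\!\int_{X\setminus Z_\vp(\d)}\!|\zeta|^2\,dv_g\ \leq\ C|s|+|s|\,\|\mathcal R\|_\infty,
\end{equation*}
hence $\int_{X\setminus Z_\vp(\d)}|\zeta|^2\,dv_g\leq(C+\|\mathcal R\|_\infty)/(\mu|s|)$, which is the claim with $C'=C'(\d,A_\vp,C)=(C+\|\mathcal R\|_\infty)/\mu$. (One notes the sign of $s$ is irrelevant since only $s^2$ and $|s|$ appear; replacing $s$ by $|s|$ throughout is harmless, matching the statement.) The only genuinely non-formal point is the uniform lower bound $\mu(\d)>0$ for $A_\vp^*A_\vp$ away from $Z_\vp$; this is where compactness of $X\setminus Z_\vp(\d)$ and the precise identification of $Z_{A_\vp}$ with $\{\det\vp=0\}$ are used, and it is essentially the content of the abstract concentration theorem of \cite{Ma} specialized to our $A_\vp$. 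I would therefore present the above as a direct verification that $(D_E,A_\vp)$ is a concentrating pair and invoke \cite{Ma} for the stated quantitative form, or equivalently include the short self-contained estimate above.
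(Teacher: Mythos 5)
Your proposal is correct and takes essentially the same route as the paper. The paper's own proof is a one-line citation---having verified the concentrating condition (\ref{E:MainCC}) in Theorem~\ref{T:Main}, it simply invokes Proposition~2.4 of \cite{Ma}---and your argument is a faithful, self-contained unpacking of exactly the Bochner-type estimate that proposition encapsulates: the cross term $D_E^*A_\vp+A_\vp^*D_E$ drops to order zero because its principal symbol is the concentrating-condition expression, and the $s^2$ term is coercive off the $\delta$-neighborhood of $Z_\vp=\{\det\vp=0\}$ by compactness. So there is no genuine gap and no different idea; you have just chosen to spell out what the paper cites.
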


\begin{proof}
Since $A_\vp$ satisfies the concentrating condition (\ref{E:MainCC}), the theorem follows directly from Proposition~2.4 of \cite{Ma}. $\qedhere$

\end{proof}

In applying this principle, the singular set plays a crucial role. If $Z_\vp=X$, no concentration occurs. For example, this happens when
$$
\dim X\equiv 6\pmod{8}\qquad \text{and}\qquad  \text{$\rank(E)$ is odd},
$$
since $\vp_{ij}=-\vp_{ji}$ in (\ref{E:LE}), and thus $\det(\vp_{ij})=0$. On the other hand, when $Z_\vp=\emptyset$, we have:

\begin{cor}\label{C:ES}
Let $D_E$ and $A_\vp$ be as in Theorem~\ref{T:Main}.
If $Z_\vp=\emptyset$, then $\ind D_E=0$.
\end{cor}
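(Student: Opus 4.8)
The plan is to combine the deformation invariance of the Fredholm index with the concentration principle of Theorem~\ref{T:CP}. Since $sA_\vp$ is a bundle map, the family $D_s=D_E+sA_\vp$ is a continuous path of elliptic operators, so $\ind D_s=\ind D_E$ for every $s$; hence it suffices to show that for all sufficiently large $|s|$ the operator $D_s$ is invertible, i.e. $\ker D_s=0$ and $\ker D_s^*=0$, which forces $\ind D_E=\ind D_s=0$.

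First I would dispose of $\ker D_s$. When $Z_\vp=\emptyset$ the neighborhood $Z_\vp(\d)$ is empty for every $\d>0$, so Theorem~\ref{T:CP} with $C=0$ shows that any $\zeta\in\ker D_s$ with $\|\zeta\|_2=1$ would satisfy $1=\int_X|\zeta|^2\,dv_g<C^\p/|s|$, which is impossible once $|s|>C^\p$; thus $\ker D_s=0$ for $|s|$ large. (One can also argue directly: from $D_s^*D_s=s^2A_\vp^*A_\vp+s(D_E^*A_\vp+A_\vp^*D_E)+D_E^*D_E$, the cross term is a bundle map because its principal symbol vanishes by (\ref{E:MainCC}), while $A_\vp^*A_\vp$ is fiberwise positive definite since $A_\vp=\tau_E\circ(\Id\otimes\vp)$ is fiberwise invertible away from $Z_\vp=\emptyset$; by compactness of $X$ this gives $D_s^*D_s\ge(cs^2-C|s|)\,\Id>0$ for $|s|$ large.)

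It remains to handle $\ker D_s^*$, which is the delicate point, since Theorem~\ref{T:CP} as stated bounds only sections of $\S^+\otimes E$. Here I would apply the concentration principle instead to $D_s^*=D_E^*+sA_\vp^*$, again a first-order elliptic operator of the form covered by \cite{Ma}. Two things must be checked: that $Z_{A_\vp^*}=Z_\vp=\emptyset$ --- immediate, since the adjoint of a fiberwise isomorphism is a fiberwise isomorphism --- and that $A_\vp^*$ satisfies the concentrating condition relative to $D_E^*$, i.e. $\s_{D_E}(\ga)\circ A_\vp^*+A_\vp\circ\s_{D_E^*}(\ga)=0$. For the latter I would derive the identity from (\ref{E:MainCC}) using only the ellipticity of $D_E$: since $\s_{D_E}(\ga)$ and $\s_{D_E^*}(\ga)$ are, for $\ga\neq0$, mutually inverse up to a nonzero scalar, composing (\ref{E:MainCC}) on the left with $\s_{D_E}(\ga)$ and on the right with $\s_{D_E^*}(\ga)$ and simplifying yields the asserted relation for $\ga\neq0$, hence for all $\ga$ by continuity. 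With these two facts, Theorem~\ref{T:CP} applied to $D_s^*$ gives $\ker D_s^*=0$ for $|s|$ large, by the contradiction argument of the previous paragraph. Combining with $\ind D_s=\ind D_E$ finishes the proof. I expect making this adjoint symmetry ($D_E\leftrightarrow D_E^*$, $A_\vp\leftrightarrow A_\vp^*$) precise --- especially the verification of the adjoint form of the concentrating condition --- to be the only real obstacle; the rest is routine.
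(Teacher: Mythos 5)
Your proof is correct, and at the top level it mirrors the paper's: invoke the concentration principle of Theorem~\ref{T:CP} to force $\ker D_s=0$ for large $|s|$, then use homotopy invariance of the index. The place where you genuinely diverge is the treatment of the cokernel. You deform $D_E^*$ by the \emph{literal adjoint} $A_\vp^*$, so that $D_s^*=D_E^*+sA_\vp^*$, and you verify the concentrating condition for the pair $(D_E^*,A_\vp^*)$, namely $\s_{D_E}(\ga)\circ A_\vp^*+A_\vp\circ\s_{D_E^*}(\ga)=0$, by sandwiching (\ref{E:MainCC}) between $\s_{D_E}(\ga)$ and $\s_{D_E^*}(\ga)$ and using $\c(\ga)^2=-|\ga|^2\Id$. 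This is a clean, fully general observation: condition (\ref{E:CP}) is preserved under $(D,A)\mapsto(D^*,A^*)$ whenever $D$ is elliptic. The paper instead deforms $D_E^*$ by $A_\vp$ itself --- which makes sense because Definition~\ref{D:CLP} defines $A_\vp$ on $\L^{0,p}\otimes E$ for \emph{every} $p$, so it acts on $\S^-\otimes E$ as well, and the proof of Theorem~\ref{T:Main} is degree-independent --- and concludes $\ind D_E^*\leq 0$ from the vanishing kernel of $D_E^*+sA_\vp$. Both routes work; yours is more portable (it would apply to any concentrating pair), while the paper's exploits the specific bilateral structure of $A_\vp$. One minor nuance to be explicit about, which the paper does address: since $A_\vp$ is conjugate-linear, $D_s$ for $s\neq 0$ is only real-linear, so the deformation-invariance statement should be read as $\ind_\R D_s=\ind_\R D_E=2\ind_\C D_E$, and the final conclusion is that $2\ind D_E=\ind_\R D_s=0$. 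Your auxiliary direct estimate for $D_s^*D_s$ is also fine as a sanity check.
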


\begin{proof}
By Theorem~\ref{T:CP},  the singular set $Z_\vp=\emptyset$ implies that for sufficiently large $s$, the operator
$D_s$ has trivial kernel. It follows that
$$
\ind D_E=\tfrac12\ind_{\hspace{-1pt}\R}D_s\leq 0,
$$
where the second term denotes the index of $D_s$ as a real operator, and the equality follows
since $D_s$ is a compact perturbation of $D_E$.
Applying the same argument to the operator $D_E^*+sA_\vp$ yields
$\ind D_E^*=-\ind D_E \leq 0$, so the claim follows. $\qedhere$
\end{proof}


\subsubsection{Compact spin almost hermitan manifolds}

Suppose $c_1(X)\equiv 0\pmod{2}$, so that $X$ admits a spin structure.  Fix a spin structure $\s$ on $X$, which is equivalent to
choosing a square root $N_\s$ of the canonical bundle $K_X$, i.e., $N_\s^2\cong K_X$. In this case, the complex spinor bundle
$\S_\s$ of the spin structure $\s$ is given by
$$
\S_\s=\S\otimes N_\s.
$$

Let $E=N_\s\otimes F$, where $F$ is a hermitian vector bundle $F$ over $X$.
A section $\psi\in\Ga\big((F^*)^2\big)$ induces a section
$\vp\in\Ga\big(K_X\otimes (E^*)^2\big)$ via the isomorphism
$(F^*)^2\cong K_X\otimes (E^*)^2$,
which restricts to yield
$$
\Sym^2F^*\cong K_X\otimes \Sym^2E^*\quad\text{and}\quad\L^2F^*\cong K_X\otimes\L^2E^*.
$$
When $\dim X\equiv 2,6\pmod{8}$, we set
$$
A_\psi:=A_\vp:\S^\pm\otimes E=\S^\pm_\s\otimes F\to \S^\mp\otimes E=\S_\s^\mp\otimes F,
$$
where $A_\vp$ is the conjugate-linear bundle map as in Theorem~\ref{T:Main}. The singular set of this bundle map is then given by
$$
Z_\psi:=Z_\vp=\{x\in X:\det(\psi_x)=0\}.
$$

Below are examples of bundle maps $A_\psi$ whose singular set $Z_\psi=\emptyset$.
\begin{itemize}
\item[(a)]
Let $\dim X\equiv 2\pmod{8}$. The following are typical examples of nondegenerate symmetric complex bilinear forms
$\psi\in\Ga(\Sym^2F^*)$ on $F$:
\begin{itemize}
\item[\bu]
$F=TX\otimes \C$ with $\psi=g_\C$, the complex bilinear extension of the metric $g$ to $TX\otimes\C$, or
\item[\bu]
$F=\End_\C(W)$, where $W$ is a hermitian vector bundle over $X$, and $\psi$ is the natural trace pairing given by $\psi(A,B)=\tr(A\circ B)$.
\end{itemize}
\item[(b)]
Let $\dim X\equiv 6\pmod{8}$. A complex vector bundle $F$ is called a {\em complex symplectic vector bundle} if it is equipped with  a nondegenerate skew-symmetric complex bilinear form $\psi\in\Ga(\L^2F^*)$. Standard examples include:
\begin{itemize}
\item[\bu]
$F=TX\otimes\C$ with $\psi=\o_\C$, where $\o_\C(u,v)=g_\C(u,Jv)$, or
\item[\bu]
$F=W\oplus W^*$ with $\psi$ defined by $\psi\big((w_1,f_1),(w_2,f_2)\big)=f_2(w_1)-f_1(w_2)$.
\end{itemize}
\end{itemize}
In all of the above examples $(F,\psi)$, the bundle map $A_\psi\,(=A_\vp)$ satisfies the concentrating condition, and
the singular set $Z_\psi=\emptyset$. Hence,
by Corollary~\ref{C:ES}, the twisted Dirac operator
\begin{equation*}\label{E:TSD}
D_{N_\s\otimes F}:\Ga(\S_\s^+\otimes F)\to \Ga(\S_\s^-\otimes F)
\end{equation*}
has index $\ind  D_{N_\s\otimes F}=0$.

\begin{rem}
The vanishing of the index  also follows easily from the Atiyah-Singer index theorem:
$$
\ind D_{N_\s\otimes F} =\int_X \ch(F)\hat{{\rm A}}(X),
$$
where $\hat{{\rm A}}(X)$ is a polynomial in the  Pontryagin classes $p_k(X)=(-1)^kc_{2k}(X)$ and $\ch(F)$ is a polynomial in the even Chern classes $c_{2\ell}(F)$ (since $F\cong F^*$ as a complex vector bundle). As $\frac12 \dim X$ is odd, there is no top-degree term in the integrand, and therefore the integral vanishes.

\end{rem}


\medskip

\non
Department of  Mathematics,  University of Central Florida, Orlando, FL 32816

\non
Email: junho.lee\@@ucf.edu

\end{document}